\theoremstyle{plain}
\newtheorem{theorem}{Theorem}[section]
\newtheorem{lemma}[theorem]{Lemma}
\theoremstyle{definition}
\newtheorem{definition}[theorem]{Definition}
\theoremstyle{remark}
\newtheorem{remark}[theorem]{Remark}
\newtheorem{claim}{Claim}
\newcommand{\R}{\mathbb{R}}
\newcommand{\del}{\partial}
\begin{document}

\title{Euler characteristic and quadrilaterals of normal surfaces}

\author{Tejas Kalelkar}

\address{   Stat-Math Unit\\
        Indian Statistical Institute\\
        Bangalore, India}

\email{tejas@isibang.ac.in}

\date{\today}

\subjclass{Primary 57Q35 ; Secondary 57M99} 

\begin{abstract}
Let $M$ be a compact 3-manifold with a triangulation $\tau$. We give an inequality relating the Euler characteristic of a
surface $F$ normally embedded in $M$ with the number of normal quadrilaterals in $F$. This gives a relation
between a topological invariant of the surface and a quantity derived from its combinatorial description. Secondly, we obtain an inequality relating the number of normal triangles and normal quadrilaterals of $F$, that depends on the maximum number of tetrahedrons that share a vertex in $\tau$.
\end{abstract}

\maketitle

\section{Introduction}
Let $M$ be a compact 3-manifold with a triangulation $\tau$. The triangulation $\tau$ may be a pseudo-triangulation. A pseudo triangulation is a simplicial-complex where each 3-cell is a tetrahedron whose interior is embedded, but the boundary may not injective.A surface $F$
in $M$ is said to be normal if it is properly embedded in $M$, it intersects each simplex
transversally and it intersects each tetrahedron in a disk that can be isotoped, by an isotopy that is
invariant on each simplex, to one of the 7 disk types shown in Figure \ref{Fig1}.

Normal surfaces were first introduced by Kneser\cite{Kn} in 1929
and later developed by Schubert\cite{Schu} and Haken\cite{Ha, Ha2} for use in
an algorithm for recognising the unknot. Haken began the
construction of an algorithm for solving the homeomorphism problem
for a large class of 3-manifolds, the details of which were
completed by Jaco and Oertel\cite{Ja} and Hemion\cite{He} for
manifolds containing an incompressible 2-sided surface. Normal surfaces have been used for a variety of
tasks by Rubinstein\cite{Ru, Ru2}, Jaco and Tollefson\cite{Ja2}
and Jaco, Letscher and Rubinstein\cite{Ja3}. For an extensive
study of normal surface theory we refer to \cite{Ma}.

An embedded normal surface is determined uniquely by the number of triangles
and quadrilaterals of a type within each tetrahedron. These are called the normal coordinates of the normal surface. There exists a system of homogenous linear equations called the matching equations which ensure that normal disks in adjacent
tetrahedrons glue together to form a surface. As quadrilaterals of different types within the same tetrahedron intersect, the normal coordinates of a normal surface have at most one non-zero coordinate corresponding to the different quadrilateral types, in each tetrahedron. A solution
to the matching equations is called admissible if at most one of the
coordinates corresponding to the different quadrilateral types is
nonzero, in each tetrahedron. The non-negative admissible
solutions of the matching equations are in bijection with the set
of embedded (not necessarily connected) normal surfaces. Every such solution is a linear combination of
a finite set of fundamental solutions with non-negative integer
coefficients. The additive structure of the solution space is
geometrically realised as the Haken sum of the corresponding
surfaces. It is this combinatorial description of normal surfaces
that is utilised in solving algorithmic problems.

The goal of this paper is to give a relation between the Euler
characteristic of a normal surface, a topological invariant, and
the number of normal quadrilaterals in its embedding, obtained from its
combinatorial description. Secondly, we get a relation between the
number of normal triangles and normal quadrilaterals.

\begin{figure}
\centering
\includegraphics[width=0.7\textwidth]{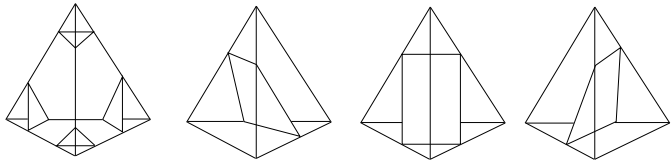}
\caption{The 4 Normal Triangle and 3 Normal Quadrilateral
Types}\label{Fig1}
\end{figure}

\begin{theorem}\label{genus-quad}
Let $F$ be a normal surface in a triangulated 3-manifold $M$. Let $Q$ be the number of normal
quadrilaterals in $F$. Then,
$$ \chi(F) \geq 2 - 7 Q$$
In particular, if $F$ is an oriented, closed and connected normal surface
of genus $g$,
$$ g \leq \frac{7}{2} Q $$
\end{theorem}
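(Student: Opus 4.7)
My plan is to compute $\chi(F)$ directly from the natural cell structure that $\tau$ induces on $F$: the 2-cells are the $T+Q$ normal disks, the 1-cells are the arcs $F \cap f$ for faces $f$ of $\tau$, and the 0-cells are the intersection points $F \cap e$ for edges $e$ of $\tau$. Each normal triangle contributes $3$ edges and each quadrilateral $4$; assuming $M$ closed for simplicity, each 1-cell lies on an interior face of $\tau$ and so is shared by exactly two normal disks, giving $2E = 3T + 4Q$. Therefore $\chi(F) = V - T/2 - Q$, and the theorem reduces to the combinatorial estimate $V \geq T/2 + 2 - 6Q$ (the case $\partial M \neq \emptyset$ is analogous, with an additional correction for edges lying on $\partial M$).

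To establish the vertex estimate, I would split $F = F_T \cup F_Q$ into the sub-surface $F_T$ built from triangles and the sub-surface $F_Q$ built from quadrilaterals, related by $\chi(F) = \chi(F_T) + \chi(F_Q) - \chi(F_T \cap F_Q)$. The structural fact driving the proof is that two triangles of $F$ glued along an edge necessarily cut off the same vertex of $\tau$, so each connected component of $F_T$ lives inside a single vertex link $\mathrm{lk}(v)$; since $M$ is a manifold, $\mathrm{lk}(v)$ is a $2$-sphere, so every component of $F_T$ is a planar sub-surface, with $\chi = 2 - b$ for its $b$ boundary circles. This gives the clean formula $\chi(F_T) = 2c - B$, where $c$ is the number of components of $F_T$ and $B$ is its total number of boundary circles.

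Combining with an upper bound $\chi(F_Q) \leq Q$ (since $F_Q$ consists of $Q$ disks glued along some edges), the argument reduces to controlling $c$ and $B$ in terms of $Q$: each quadrilateral has at most $4$ edges lying on $\partial F_T$, and when inserted into a vertex link it can create only a bounded number of new $F_T$-components, which is where the coefficient $7 = 4 + 3$ should come from. The main obstacle I anticipate is precisely this bookkeeping: naive counts yield weaker constants, and one must exploit the specific combinatorics of the three admissible quadrilateral types within each tetrahedron (and how they interact with adjacent triangle types) to pin down the coefficient $7$ exactly, while also handling the $1$-dimensional correction $\chi(F_T \cap F_Q)$ at vertices of $F$ where the cyclic pattern of disks alternates between triangles and quadrilaterals. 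The second assertion is then immediate: for an orientable closed connected $F$ of genus $g$ one has $\chi(F) = 2 - 2g$, so $2 - 2g \geq 2 - 7Q$ gives $g \leq 7Q/2$.
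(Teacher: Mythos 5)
Your overall skeleton is the paper's: split $F$ into the triangle part and the quadrilateral part, observe that triangles glued along an edge link a common vertex so that each component of the triangle part is a planar subsurface of a vertex link, and then control boundary circles by the $4Q$ edges of the quadrilaterals. But the proposal stops exactly where the theorem is actually proved, and two of the steps you do state point in the wrong direction.

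First, to bound $\chi(F)$ from below via $\chi(F)=\chi(F_T)+\chi(F_Q)-\chi(F_T\cap F_Q)$ you need a \emph{lower} bound on $\chi(F_Q)$, whereas you assert the upper bound $\chi(F_Q)\le Q$, which is useless here (a union of $Q$ disks glued along edges can have very negative Euler characteristic). The paper's Lemma on $B'$ supplies the missing estimate: build $F_Q$ by adjoining one quadrilateral at a time; each new quadrilateral meets the union of the previous ones in a circle or in at most $4$ disjoint arcs, so $\chi$ drops by at most $3$ per quadrilateral, giving $\chi(F_Q)\ge \mathrm{const}-3Q$. Second, the constant $7$ requires no analysis of the three quadrilateral types or of "how they interact with adjacent triangle types"; it is simply $4+3$: the total edge weight of $F_Q$ is $4Q$, which bounds the number of boundary circles of $F_T$ by $4Q$ and hence $\chi(F_T)=2c-B\ge 2c-4Q$, while the incremental argument contributes the $-3Q$. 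Your worry that "naive counts yield weaker constants" is unfounded, and the speculation that type-specific combinatorics is needed is a sign the counting was not carried out. Third, the correction term $\chi(F_T\cap F_Q)$ is left unresolved; the paper removes it by excising a regular neighbourhood of the singular points of the triangle complex and absorbing it into the quadrilateral side, so the two pieces become genuine surfaces meeting in disjoint circles and $\chi(F)=\chi(A')+\chi(B')$ holds exactly. Finally, the opening reduction to $V\ge T/2+2-6Q$ is never used and the all-triangle case $Q=0$ (where $F$ is a union of vertex-linking spheres, so $\chi(F)\ge 2$) is not addressed. As written, the proposal is a correct plan for the first half of the argument and a gap where the coefficient $7$ must be produced.
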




\begin{definition}
Let $F$ be a normal surface in $M$. Let $t$ be a normal triangle of $F$ that lies in a
tetrahedron $\Delta$. The triangle $t$ is said to link a vertex $v$ of
$\Delta$ if $t$ separates $\del \Delta$ into two disks such
that the disk containing $v$ has no other vertices of $\Delta$.
Similarly, a normal arc $\alpha$ in a face $\mathcal{F}$ is said to link a
vertex $v$ of $\mathcal{F}$ if the segment containing $v$ in $\del \mathcal{F} - \alpha$ has no other vertices of $\mathcal{F}$.
\end{definition}

\begin{definition}
Let $S(v)$ be the boundary of a small ball neighbourhood of $v$. The sphere $S(v)$ is a normal surface composed of normal triangles linking $v$, one from each normal isotopy class. This is defined to be the vertex linking sphere linking vertex $v$.
\end{definition}

\begin{remark}\label{vertexlinking sphere}
Any closed connected normal surface $S$ in $M$ composed of normal triangles is normally isotopic to a vertex-linking sphere. This is due to the following reasons.

Firstly, all normal triangles in $S$ link the same vertex. If this were not true, there would be normal triangles $t_1$ and $t_2$ in $S$, linking distinct vertices of $\tau$, that intersect in common edges. This is not possible as normal arcs linking different vertices of a face are not normally isotopic. So, let $v$ be the common vertex linked by triangles of $S$. Then $S$ is a cover of $S(v)$, as $S(v)$ is composed of one triangle from each of the normal isotopy classes of triangles linking vertex $v$. As $S(v)$ is a sphere and $S$ is a closed connected cover of $S(v)$, the covering projection map is a homeomorphism. Therefore, there is only one triangle of $S$ in each normal isotopy class of triangles linking $v$ and thus, $S$ is normally isotopic to $S(v)$.
\end{remark}

This remark is the motivation for Theorem \ref{genus-quad}. The remark is not true for
closed connected normal surfaces embedded in 3-complexes. For example, take
a triangulation of a surface $F$ and construct a 3-complex
by taking the join with a common point of all triangles in
$F$. Push $F$ into the interior of this 3-complex so
that $F$ is embedded as a closed normal surface composed
entirely of normal triangles.

We  expect that in Riemannian 3-manifolds, triangles
correspond to positive curvature pieces. This would imply that
when quadrilaterals have a curvature that is bounded below, the
smaller the Euler characteristic, the greater would be the number
of quadrilaterals.

\begin{remark}
An inequality relating the Euler characteristic with the total number of normal disks is easy to obtain. This is because the number of normal discs gives a bound on the number of disjoint closed curves on the surface, up to ispotopy. This, in turn, gives a lower bound on the Euler characteristic.
\end{remark}

Our second theorem, Theorem \ref{tria-quad}, gives a relation between the number of
triangles and the number of quadrilaterals of an embedded normal
surface. Let $A$ be the union of triangles of an
embedded normal surface $F$. Let $v$ be a vertex of the
triangulation $\tau$. Let $N_v$ be the number of tetrahedrons containing
$v$. Let $\sigma$ be a \textit{strongly connected} (defined in Definition \ref{strongly conn defn}) component of the triangles of $A$ linking vertex $v$. By lemma \ref{planar surfaces}, $\sigma$ has at most $N_v$ triangles. Any two such components are `connected' by quadrilaterals. So we get an upper bound on the number of triangles in terms of the number of quadrilaterals and the maximum number of tetrahedrons linking a vertex of $\tau$.

\begin{theorem}\label{tria-quad}
Let $F$ be a normal surface in $M$, no component of which is a vertex-linking sphere. Let $T$ and
$Q$ be the number of normal triangles and normal quadrilaterals of
$F$ respectively. Let $N$ be the maximum number of
tetrahedrons linking a vertex of $\tau$. Then,
$$ T \le 4 N Q $$
\end{theorem}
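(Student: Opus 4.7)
The plan is to group the normal triangles by the vertex of $\tau$ they link, and then within each vertex into strongly connected components, so that Lemma~\ref{planar surfaces} bounds the size of each component by $N$. The problem then reduces to a counting of components: it suffices to show that the total number of strongly connected components of triangles in $F$ is at most $4Q$, since then
$$ T \;=\; \sum_{\sigma} |\sigma| \;\le\; N \cdot (\text{number of components}) \;\le\; 4NQ. $$

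First I would argue that no strongly connected component $\sigma$ of triangles linking a vertex $v$ can be closed. If $\sigma$ were closed, then because two normal arcs in a common face that link distinct vertices are not normally isotopic, no triangle from a different $T_w$ (with $w \ne v$) and no quadrilateral can be glued to $\sigma$ along its (empty) boundary; in particular $\sigma$ is itself a component of $F$. By Remark~\ref{vertexlinking sphere}, $\sigma$ would then be normally isotopic to the vertex-linking sphere $S(v)$, contradicting the hypothesis that no component of $F$ is a vertex-linking sphere.

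Next I would account for the boundary arcs of each strongly connected component. Every boundary arc of $\sigma$ lies in a face of $\tau$ and is shared with some other normal disk of $F$. Using again that normal arcs linking distinct vertices of a face are not normally isotopic, any neighbouring triangle has to link the same vertex $v$; by Definition~\ref{strongly conn defn} such a triangle must actually belong to $\sigma$, so the neighbouring disk is forced to be a quadrilateral. Consequently every strongly connected component has at least one boundary arc glued to a quadrilateral of $F$.

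To finish, I would count arcs of the triangle--quadrilateral interface. Each quadrilateral contributes exactly four normal arcs to its boundary, so the total number of triangle--quadrilateral arcs in $F$ is at most $4Q$. Since every strongly connected component of triangles contributes at least one such arc, the total number of components is at most $4Q$, and combining with Lemma~\ref{planar surfaces} gives $T \le 4NQ$. The main subtlety is the step in the previous paragraph asserting that a triangle adjacent to $\sigma$ and linking the same vertex $v$ must already lie in $\sigma$; this is where the precise notion of strong connectivity from Definition~\ref{strongly conn defn} is used, and it is the place the argument really relies on the word ``strongly'' rather than merely ``connected''.
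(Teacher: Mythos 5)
Your proposal is correct and is essentially the paper's argument: the same decomposition into strongly connected components of triangles linking a fixed vertex, the same use of Lemma \ref{planar surfaces} to bound each component by $N$, and the same counting of the triangle--quadrilateral interface (which the paper packages as a degree count in a graph with Q-vertices and S-vertices, yielding $4Q \ge S$). Your explicit justification that each component must abut a quadrilateral --- via non-isotopy of arcs linking distinct vertices, maximality of strongly connected components, and Remark \ref{vertexlinking sphere} --- is exactly what underlies the paper's terser claim that there are no isolated S-vertices.
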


\section{An inequality relating the Euler characteristic and the number of
quadrilaterals of normal surfaces}

In this section we give a proof of Theorem \ref{genus-quad}. Let $F$ be a normal surface in $M$. Let $Q$ be the
number of normal quadrilaterals in $F$. Let $A$ (respectively $B$) be
the union of normal triangles (respectively normal quadrilaterals) of
$F$. $A$ and $B$ are 2-complexes which are subsets of the surface $F$. Define the boundary of a 2-complex $\sigma$ to be the union of edges of $\sigma$ that do not intersect $int(\sigma)$.

An outline of the proof is as follows. We suitably modify the 2-complexes $A$ and $B$, at
singular points, to get surfaces $A'$ and $B'$ that intersect in a disjoint union of
circles with $A' \cup B' = F$. We then show that the connected components of  $A'$ are planar surfaces. We, therefore, get a lower bound on the Euler
characteristic of $A'$ in terms of the number of components $|\del A'|$ of $\del A'$. The boundaries $\del A'$ and $\del B'$ are equal. We introduce weights and get an inequality relating $|\del B'|$ with the weight $w(\del B)$, of $\del B$. Now as $w(\del B) \leq w(B) = 4Q$ we obtain an upper bound on $ |\del A'|$ in terms of the number of quadrilaterals. Thus, we get a lower bound on the Euler characteristic of $A'$ in terms of the number of quadrilaterals.

For a bound on the Euler characteristic of $B'$, we take an
increasing union of quadrilaterals and use the fact that each new
quadrilateral intersects the union of the preceding stage in a circle or at
most 4 disjoint segments. This gives us a lower bound in terms of the number of quadrilaterals.

As the Euler characteristic of $F$ is the sum of Euler characteristics of $A'$ and $B'$, we get a lower bound for the
Euler characteristic of $F$ in terms of $Q$.

\begin{definition}
Let $D_n$ be the collection of normal discs of $F$. Let $D'_n$ be the collection of
triangles obtained by taking the barycentric subdivision of $D_n$. Then, the regular
neighbourhood of a point $w$ in $F$ is defined as $N(w) =  int ( \cup\{ D'_n : w \in D'_n\} )$.
\end{definition}

\begin{definition}
A point $w$ in $A$ is said to be a singular point if $N(w)$ is not
homeomorphic to an open set in the upper half-plane $\{
(x,y) \in \R^2 | \, y \geq 0 \}$. Let $\omega$ be the set of
singular points of $A$ and $N(\omega) = \cup \{ N(w) : w \in
\omega\}$.
\end{definition}

Let $A'=A - N(\omega)$, $B' = B \cup \overline{N(\omega)}$. Therefore, $A'$ and
$B'$ are compact surfaces with $F = A' \cup B'$ and $A' \cap B'$ a disjoint union of circles. The surface $A'$ is the closure of the interior of $A$, while $B'$ is the closure of a neighbourhood of $B$ in $F$.

\begin{definition}\label{strongly conn defn}
A subcomplex $\sigma$ of $A$ is said to be strongly connected if
for any two triangles $t$ and $t'$ in $\sigma$, there exists a
sequence of triangles $\{ t_i \}_{i=1}^n$ with $t = t_1$
and $t' = t_n$, such that for all $i$, $t_i$ and
$t_{i+1}$ intersect in at least one edge. A subcomplex
consisting of a single triangle is taken to be strongly connected.
It is easy to see that $A$ can be decomposed as a union of maximal
strongly connected components.
\end{definition}

\begin{lemma}\label{planar surfaces}
The interior of a strongly connected component $\sigma$ of $A$ is a planar surface. Let $N_v$ be the number of tetrahedrons that contain the vertex $v$. Let $N$ be the maximum $N_v$ over all vertices $v$ in the triangulation $\tau$. Then there are at most $N$ triangles in $\sigma$.
\end{lemma}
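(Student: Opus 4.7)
The plan is to associate a ``depth'' invariant to each triangle in $\sigma$ and use it to realize $\sigma$ combinatorially as a subcomplex of the vertex-linking sphere $S(v)$, from which both conclusions follow at once.

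First I would show that every triangle in $\sigma$ links a single common vertex $v$ of $\tau$. If $t, t'$ are edge-adjacent triangles of $\sigma$, their shared edge is a normal arc in a face of $\tau$; since normal arcs linking different vertices of a face are not normally isotopic (as exploited in Remark~\ref{vertexlinking sphere}), both $t$ and $t'$ must link the same vertex of that face. Propagating along the chain witnessing strong connectedness, every triangle of $\sigma$ links one common vertex $v$.

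Next, for each triangle $t \in \sigma$ contained in a tetrahedron $\Delta$ having $v$ as a vertex, I would assign a depth $\lambda(t) \in (0,1)$: after normally isotoping $t$ to be parallel to the face of $\Delta$ opposite $v$, $t$ cuts each of the three edges of $\Delta$ emanating from $v$ at a common barycentric parameter, which I define to be $\lambda(t)$. If $t, t' \in \sigma$ share an edge lying in a face $\mathcal{F}$ between two tetrahedra (both containing $v$), the shared arc has a well-defined distance from $v$ along $\mathcal{F}$, and both $t$ and $t'$ must meet the edges from $v$ in their respective tetrahedra at this same parameter. Hence $\lambda(t) = \lambda(t')$, and by strong connectedness $\lambda$ is constant on $\sigma$.

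In any single tetrahedron, the parallel normal triangles of $F$ linking $v$ occur at pairwise distinct depths, so constancy of $\lambda$ forces $\sigma$ to contain at most one triangle from each of the $N_v \le N$ tetrahedra containing $v$, giving the bound $|\sigma| \le N$. The map $\pi \co \sigma \to S(v)$ sending each triangle of $\sigma$ to the unique triangle of $S(v)$ in its normal-isotopy class is then injective on triangles, and the compatibility of normal arcs on shared faces extends $\pi$ to a cellular embedding of $\sigma$ as a subcomplex of the sphere $S(v)$ (or disk, if $v \in \del M$). Consequently $\sigma$ is a planar $2$-complex, and in particular its interior is a planar surface.

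The main obstacle is making the depth $\lambda$ unambiguous and showing that it is genuinely preserved across each edge-adjacency: this rests on straightening the normal triangles to their standard parallel position via simultaneous normal isotopy and carefully tracking the nesting position of each shared arc within its face.
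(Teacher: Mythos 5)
Your proposal is correct and follows essentially the same route as the paper: establish that all triangles of $\sigma$ link a common vertex $v$ via the normal-arc isotopy argument, introduce a height/depth invariant that is constant on a strongly connected component (forcing at most one triangle per tetrahedron containing $v$, hence at most $N_v \le N$ triangles), and realize $\sigma$ as a subcomplex of the vertex-linking sphere $S(v)$ to conclude planarity. Your explicit treatment of why the depth is preserved across a shared face, and the remark about the disk case when $v \in \del M$, are just slightly more careful versions of the paper's "triangles at different heights do not intersect."
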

\begin{proof}
All triangles in the strongly connected component $\sigma$ of $A$ link the same vertex. Otherwise, there would be a common normal arc shared by a normal triangle of $F$ linking a vertex $v$ and a normal triangle of $F$ linking a distinct vertex $w$. This is a contradiction as normal arcs in a face, linking different vertices are not normally isotopic. So we can assume all triangles of $\sigma$ link the same vertex $v$.

To each normal triangle of $\sigma$ we can associate a height from the vertex $v$. Given a height $h$ from the vertex $v$, each tetrahedron has at most one triangle of $\sigma$ at height $h$. So, in all, there are at most $N_v$ triangles at height $h$. Triangles linking $v$ at different heights from $v$ do not intersect. So, as $\sigma$ is connected, all triangles of $\sigma$ can be assumed to be at the same height. Thus, there are at most $N_v$ triangles in $\sigma$.

Furthermore, the union of all possible normal triangles linking $v$ at the height $h$ (from $v$) gives a closed connected normal surface. Hence by remark \ref{vertexlinking sphere}, it is a vertex linking sphere $S(v)$. As $\sigma$ is composed of triangles at height $h$ from $v$, $\sigma \subset S(v)$. So the interior of $\sigma$ is a planar surface.
\end{proof}

\begin{lemma}\label{lemma1}
Let $|A'|$ denote the number of components of $A'$. Then,
$$ \chi(A') \geq 2|A'| - 4 Q $$
\end{lemma}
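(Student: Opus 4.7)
The plan is to analyse $A'$ component-by-component using Lemma \ref{planar surfaces}, and then bound the number of boundary circles via a weight count on edges of quadrilaterals.

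First, I would establish that every component of $A'$ is a compact planar surface. Since $A'$ is the closure of $A - N(\omega)$ and singular points are exactly where distinct strongly connected components of $A$ can meet, each connected component of $A'$ is contained in a single strongly connected component $\sigma$ of $A$, whose interior is planar by Lemma \ref{planar surfaces}. Thus each component $A'_i$ is a compact planar surface with some number $k_i$ of boundary circles, giving $\chi(A'_i) = 2 - k_i$, and summing over the $|A'|$ components,
$$ \chi(A') = 2|A'| - |\del A'|. $$
Since $A' \cap B'$ is a disjoint union of circles forming both $\del A'$ and $\del B'$, we have $|\del A'| = |\del B'|$, and it remains to prove $|\del B'| \le 4Q$.

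For this final bound, I would introduce a weight function on 1-subcomplexes of $B$: for a subcomplex $K$, define $w(K) = \sum_{e \subset K} \#\{q : e \subset q\}$, the total number of (quadrilateral, edge) incidences along edges of $K$. Then $w(B) = 4Q$ because each of the $Q$ quadrilaterals contributes its four edges, and $w(\del B) \le w(B)$ since the edges of $\del B$ form a subset of the edges of $B$. The key step is $|\del B'| \le w(\del B)$: each circle of $\del B'$ must traverse at least one edge of $\del B$, because the only portions of $\del B'$ not already on $\del B$ are short arcs on $\del N(\omega)$, and these arcs are necessarily bookended by boundary edges of quadrilaterals. Since every edge in $\del B$ carries weight at least one, assembling the inequalities gives $|\del A'| = |\del B'| \le w(\del B) \le 4Q$, and substituting into the Euler-characteristic identity yields $\chi(A') \ge 2|A'| - 4Q$.

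The main obstacle will be rigorously justifying the inequality $|\del B'| \le w(\del B)$, which requires a careful case analysis of the local structure of $F$ near singular points of $A$ to verify that each circle of $\del B'$ genuinely contributes at least one quadrilateral-edge incidence. The other steps—planarity of components of $A'$ via Lemma \ref{planar surfaces}, the Euler-characteristic formula for planar surfaces, and the trivial inequality $w(\del B) \le w(B) = 4Q$—are comparatively routine.
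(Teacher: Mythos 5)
Your proposal is correct and follows essentially the same route as the paper: planarity of the components of $A'$ (via Lemma \ref{planar surfaces}) gives $\chi(A') = 2|A'| - |\del A'|$, and an edge-weight count on $B$ (your incidence count $\#\{q : e \subset q\}$ is exactly the paper's weight, taking the value $2$ on interior edges and $1$ on boundary edges, so $w(B)=4Q$) gives $|\del A'| = |\del B'| \le w(\del B) \le w(B) = 4Q$. The step you flag as the main obstacle is handled in the paper by the same observation you sketch, namely that the components of $\del B'$ correspond to the closed curves making up $\del B$, each of which contains at least one edge.
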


\begin{proof}

\begin{claim}\label{claim1}
$\chi(A') = 2|A'| - |\del A'|$.
\end{claim}
Let $\{A_n\}_n$ be the strongly connected components of $A$. Then, $int(A') = int(A) = \cup_n int(A_n)$. By lemma \ref{planar surfaces}, the interior of $A_n$ are planar surfaces. So, as $A'$ is a surface (not just a 2-complex), $A'$ is itself a disjoint union of connected planar surfaces.

A connected planar surface $S$ with $b$ boundary components has $\chi(S) = 2-b$. As $A'$ is a disjoint union of connected planar surfaces
 $\chi(A') = 2|A'| - |\del A'|$ as claimed.
\newline

\begin{definition}
Let $\sigma$ be a 2-complex. Define $w(e)$, weight of an edge $e
\in \sigma$, to be
$$ w(e) = \left\{
\begin{array}{ll}
1 & \mbox{if $e \subset \del \sigma$} \\
2 & \mbox{otherwise}
\end{array}\right.
$$
Define $w(\sigma)$, the weight of $\sigma$, to be the total weight
of all edges of $\sigma$. \\
For $\psi$ a 1-complex. Define $w(\psi)$ to be the total number of edges of $\psi$. \\
From the definition, it is clear that $w(\sigma) \geq w(\del \sigma)$.
\end{definition}

\begin{claim}\label{claim2}
$4Q \geq | \del A' |$
\end{claim}

The boundary of $B$ is a union of simple closed curves $\{ c_i \}$ that pairwise intersect in singular points. The set $\{c_i\}$ is in bijection with the connected components of $\del B'$. Therefore, $ w(\del B) = \Sigma w(c_i) \geq | \del B'|$.

As, $4Q = w(B) \geq w(\del B)$ and $\del B' = \del A'$, we get
the inequality $4Q \geq |\del A'| $ as claimed.
\newline

Using Claims \ref{claim1} and \ref{claim2}, we get the
desired relation
$$ \chi(A') \geq 2 |A'| - 4Q$$
\end{proof}

\begin{lemma}\label{lemma2}
For $B \neq \phi$,  $ \chi(B') \ge
\left\{
\begin{array}{ll}
 |\omega| - 3Q  & \mbox{when } |\omega| > 0 \\
 4 - 3Q         & \mbox{when } |\omega| = 0
\end{array}
\right.$
\end{lemma}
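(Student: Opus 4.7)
The plan is to build $B'$ by attaching $2$-dimensional pieces one at a time and tracking the change in Euler characteristic by inclusion-exclusion. The singular points in $\omega$ are vertices of the normal disc complex on $F$, and for distinct $w,w'\in\omega$ the barycentric-subdivision stars $N(w)$ and $N(w')$ are disjoint, so the $\overline{N(w)}$ are pairwise disjoint closed disks in $F$. When $|\omega|>0$, I take the initial stage $X_0=\overline{N(\omega)}$, a disjoint union of $|\omega|$ closed disks, giving $\chi(X_0)=|\omega|$; when $|\omega|=0$, I take $X_0=Q_1$ for an arbitrary quadrilateral, giving $\chi(X_0)=1$.

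At each subsequent stage I attach one quadrilateral, setting $X_k=X_{k-1}\cup Q_k$, so that $\chi(X_k)-\chi(X_{k-1})=1-\chi(Q_k\cap X_{k-1})$. The key estimate is $\chi(Q_k\cap X_{k-1})\le 4$. The intersection lies inside the disk $Q_k$ and, keyed to its four corners and four edges, decomposes into: (i) a pie-slice disk at each corner of $Q_k$ belonging to $\omega$ (contributed by $\overline{N(\omega)}\subseteq X_{k-1}$), (ii) the entire edge of $Q_k$ whenever it coincides with an edge of a previously attached quadrilateral, and (iii) an isolated vertex at any remaining corner of $Q_k$ that appears as a corner of a previously attached quadrilateral. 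Every connected component of $Q_k\cap X_{k-1}$ is a disk, an arc, or a point, so it contributes at most $1$ to the Euler characteristic; the cyclic arrangement of the four corners and four edges of $Q_k$ forces the number of components to be at most $4$, with the extreme $\chi(Q_k\cap X_{k-1})=4$ attained only when all four corners of $Q_k$ lie in $\omega$ and no edge of $Q_k$ is shared. Hence $\chi(X_k)-\chi(X_{k-1})\ge -3$ at every step.

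Summing these inequalities over the $Q$ quadrilateral attachments when $|\omega|>0$ yields $\chi(B')\ge |\omega|-3Q$, and over the $Q-1$ attachments when $|\omega|=0$ yields $\chi(B')\ge 1-3(Q-1)=4-3Q$, which is the lemma. The point I expect to require the most care is the bound $\chi(Q_k\cap X_{k-1})\le 4$ in configurations mixing singular-corner pie slices with shared edges, where a pie slice and an adjacent shared edge can merge into a single component and further edges can close a cycle around $\partial Q_k$; a direct CW count on $Q_k$ (or a single inclusion-exclusion on its four corners and four edges) confirms that no configuration allows the Euler characteristic of the intersection to exceed $4$. For example, if all four corners of $Q_k$ lie in $\omega$ and all four edges are shared, the intersection is an annular collar of $\partial Q_k$ with $\chi=0$, well within the bound.
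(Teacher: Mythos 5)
Your proof is correct and follows essentially the same route as the paper: start from $\overline{N(\omega)}$ (or a first quadrilateral when $\omega=\emptyset$), attach quadrilaterals one at a time, and use $\chi(q\cap X_{k-1})\le 4$ via inclusion--exclusion to lose at most $3$ per step. Your justification of the key bound is in fact more careful than the paper's, which simply asserts the intersection is a circle or at most four disjoint arcs or points.
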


\begin{proof}
Let $B'_0 = \overline{N(\omega)}$. Let $B'_n$ be a union of $n$
quadrilaterals of $B'$ such that $B'_{n+1} \supset B'_n$. For a
quadrilateral $q$ such that $B'_{n+1} = B'_{n} \cup q$, $B'_n \cap
q$ is either a circle or a disjoint union of at most 4 arcs (or
points). So, $\chi(B'_n \cap q) \leq 4$, and we get
$$
\begin{array}{lll}
\chi(B'_{n+1}) &=& \chi(B'_n) + \chi(q) - \chi(B'_n \cap q) \\
 &\ge& ( \chi(B'_n) + 1 ) - 4 \\
\end{array}
$$
As $\overline{N(\omega)}$ is a disjoint union of discs,
$$ \chi(B'_0) = |\omega|$$
Therefore,
$$
\chi(B') = \chi(B'_Q) \ge
\left\{
\begin{array}{ll}
 |\omega| - 3Q  & \mbox{when } |\omega| > 0 \\
 1 - 3(Q-1)     & \mbox{when } |\omega| = 0
 \end{array}
 \right.
$$

\end{proof}

\begin{proof}[Proof of Theorem \ref{genus-quad}]
When $B= \phi$, the surface $F$ is composed of normal triangles and so, is a union of vertex linking spheres. Therefore as $A \neq \phi$, $\chi(F) = 2|A| \geq 2$.

Assume $B \neq \phi$, as $A' \cap B'$ is a disjoint union of circles. Therefore using lemmas \ref{lemma1} and \ref{lemma2} we get,
$$
\begin{array}{lll}
\chi(F)         &=&     \chi(A' \cup B') \\
                &=&     \chi(A') + \chi(B')\\
                &\ge&   \left\{
                        \begin{array}{ll}
                        (2|A'| - 4Q) + (|\omega| - 3Q)  &   \mbox{when $|\omega| > 0$}\\
                        (2|A'| - 4Q) + 4 - 3Q           &   \mbox{when $|\omega| = 0$}\\
                        \end{array}
                        \right.
\end{array}
$$
When $|A'|=0$ the surface is composed entirely of quadrilaterals,
so that $B \neq \phi$ and $|\omega| = 0$. So, we get the desired
relation
$$ \chi(F) \ge 2 - 7Q$$
\end{proof}

\section{An inequality relating the number of triangles and
quadrilaterals of normal surfaces}
In this section we prove an inequality relating the number of triangles and quadrilaterals of a
normal surface.

\begin{proof}[Proof of Theorem \ref{tria-quad}]

Let $F(v)$ be the union of normal triangles of $F$ linking the vertex $v$.
Let $\{ F(v)_n \}_n$ be the strongly connected components of $F(v)$.

Consider a graph $\Gamma$ whose vertices are the normal quadrilaterals
of $F$ and the non-empty $F(v)_n$. Call the former vertices
Q-vertices and the latter as S-vertices. Let there be an edge of
$\Gamma$ joining an S-vertex and a Q-vertex for every edge shared
by the corresponding $F(v)_n$ and the corresponding quadrilateral.
Let there also be edges of $\Gamma$ between Q-vertices
corresponding to every edge shared by the corresponding
quadrilaterals. For a quadrilateral in the pseudo triangulation we could have simple loops corresponding to quadrilaterals that are not injective on its edges.
\newline

As each quadrilateral has 4 sides, the degree of each Q-vertex is
4 and as there are no vertex-linking spheres, there are no
isolated S-vertices. As each edge of $\Gamma$ has at least one
vertex incident on a Q-vertex (no edges between S-vertices)
therefore, $4Q =$ total degree of Q vertices $\ge$ total degree of
S-vertices $\ge S$, where $Q$ and $S$ are the number of Q-vertices
and S-vertices respectively.
\newline

By claim \ref{planar surfaces} each $F(v)_n$ has at most $N$ triangles, where $N$ is the maximum
number of tetrahedrons linking a vertex in the triangulation of
$M$. Therefore, $N S \ge T$, where $T$ is the number of triangles
in $F$. Therefore, we get the required relation,
$$ T \le 4 N Q $$

\begin{remark}
This proof also holds for a normal surface $F$ embedded in a 3-complex, if we assumed that no component of $F$ was composed solely of normal triangles.
\end{remark}

\end{proof}

\acknowledgements{I would like to thank Siddhartha Gadgil for
useful discussions and advice. The CSIR-SPM Fellowship is
acknowledged for financial support.}

\bibliographystyle{amsplain}

\end{document}